\newtheorem{theorem}{Theorem}[section]
\newtheorem{proposition}[theorem]{Proposition}
\newtheorem{lemma}[theorem]{Lemma}
\theoremstyle{definition}
\newtheorem{definition}[theorem]{Definition}
\newtheorem{question}[theorem]{Question}
\theoremstyle{remark}
\newtheorem{remark}[theorem]{Remark}
\numberwithin{equation}{section}
\newcommand{\R}{\mathbb{R}}
\newcommand{\C}{\mathbb{C}}
\newcommand{\N}{\mathbb{N}}
\newcommand{\Z}{\mathbb{Z}}
\definecolor{darkgreen}{RGB}{0,162,0}
\definecolor{darkred}{RGB}{221,0,0}
\definecolor{darkblue}{RGB}{0,0,221}
\definecolor{gold}{RGB}{255,210,0}
\definecolor{red}{RGB}{242,43,29}
\begin{document}

\title{Quasipositive fiber surfaces are not well-quasi-ordered}

\author{Joseph Breen}
\address{University of Alabama, Tuscaloosa, AL 35401}
\email{jjbreen@ua.edu} \urladdr{https://sites.google.com/view/joseph-breen}

\author{Michele Capovilla-Searle}
\address{University of Iowa, Iowa City, IA 52240}
\email{michele-capovilla-searle@uiowa.edu} 

\thanks{JB was partially supported by NSF Grant DMS-2038103 and an AMS-Simons Travel Grant. MCS was partially supported by an LLMJ
fellowship and NSF Grant DMS-2038103.}

\begin{abstract}
    By exhibiting an explicit infinite anti-chain, we show that the class of quasipositive fiber surfaces in $S^3$ is not well-quasi-ordered under the surface minor relation. This answers questions raised by Baader-Dehornoy-Liechti and Dehornoy-Lunel-de Mesmay in the negative.  
\end{abstract}

\maketitle

\tableofcontents

\section{Introduction}

\subsection{Background} Kruskal's tree theorem of 1960 \cite{kruskal1960tree}, conjectured by Vázsonyi in the 1940s and popularized by Erdös \cite{kruskal1972wqo}, states that finite trees are well-quasi-ordered with respect to the graph minor relation. In a series of celebrated papers written between 1980 \cite{robertson1983graphI} and 2004 \cite{robertson2004graphXX}, Robertson-Seymour generalized this from trees to all finite graphs, proving what was then known as Wagner's conjecture (and is now the eponymous theorem of Robertson and Seymour). 

A \textit{minor} is obtained from a graph by edge contraction and edge/vertex deletion, and \textit{well-quasi-orderedness} of the graph minor relation means that in every infinite sequence of finite graphs, there is always a pair of graphs such that one is a minor of the other. Well-quasi-orderedness of a minor relation is desirable because it implies that minor-closed properties --- properties preserved when passing to a minor, e.g. planarity of a graph --- are characterized by a finite set of forbidden minors.  

One can enrich this setting by considering spatial graphs $G \hookrightarrow S^3$ under a topological minor relation, and then further equipping the spatial graphs with a framing (c.f. \textit{ribbon graphs} \cite{ellismonaghan2013graphs}). Specifically, by associating embedded disks to each vertex and attaching bands along edges, one obtains a compact surface $\Sigma \supset G$ with boundary, what we henceforth refer to as a \textit{Seifert surface}. We call $G$, the embedded graph along which $\Sigma$ retracts, a \textit{spine}. The corresponding "topological framed graph minor" relation is then captured by:

\begin{definition}
Let $\Sigma, \Sigma'\subset S^3$ be Seifert surfaces. We say that $\Sigma$ \emph{is a surface minor of} $\Sigma'$, denoted $\Sigma \leq \Sigma'$, if $\Sigma$ is isotopic to a subsurface of $\Sigma'$ such that $\pi_1(\Sigma)$ injects into $\pi_1(\Sigma')$ under inclusion.\footnote{In the literature, such a subsurface is often called \textit{incompressible}, in reference to its incompressibility as a submanifold of $\Sigma'$.}
\end{definition}

One can immediately see that the set of Seifert surfaces is not well-quasi-ordered under the surface minor relation. Indeed, for $n\geq 0$, let $\Sigma_n$ be the ribbon surface of an $n$-framed unknot. Then $\Sigma_i$ is a surface minor of $\Sigma_j$ if and only if $i=j$, hence the sequence $\Sigma_{0}, \Sigma_1, \dots$ is an \textit{infinite anti-chain}, i.e. an infinite sequence in which no two elements are $\leq$-comparable. 

\subsection{Motivation} There has been interest in identifying classes of surfaces that are well-quasi-ordered under the surface minor relation. For example, Baader-Dehornoy-Liechti showed that fiber surfaces of links associated to isolated plane curve singularities are well-quasi-ordered \cite[Corollary 1]{baader2023minortheory}, as did Dehornoy-Lunel-Mesmay for the tree-like plumbings of Hopf bands known as Hopf arborescent surfaces \cite[Theorem 2]{dehornoy2024hopfminor}. The latter authors remark that, more generally, an optimistic conjecture is that the class of fibered surfaces is well-quasi-ordered \cite[p. 48:4]{dehornoy2024hopfminor}.

In this article we specialize to the class of fibered surfaces supporting the unique tight contact structure on $S^3$. By work of Hedden \cite{hedden2010notions}, these are precisely \textit{quasipositive fiber surfaces}, i.e. fiber surfaces for links arising as the closure of strongly quasipositive braids, a class which includes the algebraic links of \cite[Corollary 1]{baader2023minortheory} and the positive Hopf plumbings of \cite[Theorem 2]{dehornoy2024hopfminor}. 

Additional motivation in studying quasipositive surfaces can be found in the genus defect. In \cite{borodzik2019topological}, Borodzik-Feller showed that every link is topologically concordant to a strongly quasipositive link, hence the topological $4$-genus $g_4(L)$ is determined by its value on strongly quasipositive concordance class representatives. Quasipositive surfaces minimize the Seifert genus $g(L)$ of their boundary links, but they do not in generalize minimize $g_4(L)$. In fact, the \textit{genus defect} $\Delta g(L) = g(L) - g_4 (L)$ can be arbitrarily large. Importantly, $\Delta g$ is monotonic under the surface minor relation: $\Sigma \leq \Sigma'$ implies $\Delta g(\partial\Sigma)\leq \Delta g(\partial \Sigma')$. Moreover, for a fixed $N\in \N$, the property $\Delta g(\partial \Sigma) \leq N$ is minor-closed, hence can be described by a set of forbidden minors. Liechti \cite{liechti2016positive} gave a characterization of positive braid knots with maximal topological $4$-genus by three forbidden surface minors, and subsequently \cite{liechti2020genus} used this theory to show that the $\Delta g$ is bounded below by an affine function of the braid index.

\subsection{Main result}

The following is suggested by the questions and results of \cite{baader2023minortheory,dehornoy2024hopfminor}, in particular the discussion in \cite[p. 48:4]{dehornoy2024hopfminor}.

\begin{question}\label{question:1}
Let $\Sigma_1, \Sigma_2, \dots \subset S^3$ be an infinite sequence of quasipositive fiber surfaces. Must there exist a pair $\Sigma_i, \Sigma_j$ with $i\neq j$ such that $\Sigma_i \leq \Sigma_j$? That is, are quasipositive fiber surfaces well-quasi-ordered under the surface minor relation?
\end{question}

Our main result answers this question in the negative, dispelling the optimistic conjecture alluded to by \cite{dehornoy2024hopfminor}.

\begin{theorem}\label{thm:main}
For $n\geq 1$, let $\Sigma_{2n+1}$ denote the Seifert surface in \cref{fig:torus_family}. Then 
\begin{enumerate}
    \item Each $\Sigma_{2n+1}$ is a quasipositive fiber surface. 
    \item For all $m\neq n$, the surface $\Sigma_{2m+1}$ is not a surface minor of $\Sigma_{2n+1}$.
\end{enumerate}
In particular, \cref{question:1} admits a negative answer. 
\end{theorem}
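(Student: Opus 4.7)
I would split the proof according to the two claims of the theorem.

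\textbf{Part (1): quasipositivity and fiberedness.} Given the surface in \cref{fig:torus_family}, my plan is to read off a braid presentation of $\partial\Sigma_{2n+1}$ in which each twisted band of $\Sigma_{2n+1}$ corresponds to a positive band generator $a_{i,j} = (\sigma_i\cdots\sigma_{j-1})\sigma_j(\sigma_i\cdots\sigma_{j-1})^{-1}$ in the sense of Rudolph. This realizes $\partial\Sigma_{2n+1}$ as the closure of a strongly quasipositive braid, with $\Sigma_{2n+1}$ itself as the associated quasipositive Bennequin surface. For fiberedness, the cleanest route is to recognize $\Sigma_{2n+1}$ as an iterated positive Hopf plumbing on some fibered ``base'' surface and invoke Stallings' theorem that Murasugi sums of fiber surfaces are fibered; if the structure does not directly admit this, I would fall back on computing the Alexander polynomial of $\partial\Sigma_{2n+1}$, checking that it is monic of degree $2g(\Sigma_{2n+1})$, and combining with the genus-minimality of quasipositive Seifert surfaces to conclude fiberedness.

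\textbf{Part (2): incomparability.} The subtlety is that for $m\neq n$ neither $\Sigma_{2m+1}\leq\Sigma_{2n+1}$ nor $\Sigma_{2n+1}\leq\Sigma_{2m+1}$ may hold. The plan is to isolate an invariant, or a rigid subsurface, of $\Sigma_{2n+1}$ that faithfully records the parameter $n$ and is preserved under incompressible inclusion. Several invariants are monotonic under the surface minor relation: genus, first Betti number, Euler characteristic, genus defect $\Delta g(\partial\Sigma)$, and (for fibered surfaces) monodromy data. If the $\Sigma_{2n+1}$ share a common genus or Betti number, then simple numerical invariants cannot separate them, so I would identify a distinguished essential subsurface $C_n\subset\Sigma_{2n+1}$ — plausibly a torus-knot-type subsurface whose Seifert genus or whose boundary link type is tied to $n$ — and argue that an incompressible embedding $\iota:\Sigma_{2m+1}\hookrightarrow\Sigma_{2n+1}$ would force $\iota(C_m)$ to be isotopic to a subsurface of $C_n$ of the same topological type, forcing $m=n$. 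This incompressible-subsurface rigidity would be proved either by a direct monodromy/fiber-structure analysis (after Gabai) or by tracking a link-theoretic invariant of $\partial C_n$.

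\textbf{Main obstacle.} The central challenge is Part (2), and particularly the rigidity step for incompressible embeddings between surfaces of comparable complexity. One direction of incomparability (typically the one forbidden by monotone numerical invariants like genus or $\Delta g$) should be straightforward once those quantities are computed; the reverse direction, where the candidate subsurface $\Sigma_{2m+1}$ has strictly smaller complexity than $\Sigma_{2n+1}$, is the delicate part and will require exhibiting a feature of $\Sigma_{2n+1}$ that has ``no room'' to contain $\Sigma_{2m+1}$ incompressibly. I expect the paper's main technical work to concentrate in identifying this feature and in proving the corresponding rigidity statement.
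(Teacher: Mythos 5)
Your Part (2) correctly diagnoses the difficulty --- ruling out $\Sigma_{2m+1}\leq\Sigma_{2n+1}$ for $m<n$, where no monotone numerical invariant helps --- but it stops at the diagnosis: ``identify a distinguished essential subsurface whose boundary link type is tied to $n$ and prove a rigidity statement'' is a description of what a proof would need to do, not a proof. The paper's actual mechanism, which is absent from your proposal, is the Seifert form. The key points are: (i) the surface framing of a simple closed curve $\gamma$ on a Seifert surface, equivalently the self-pairing $\tilde V_\Sigma([\gamma],[\gamma])$ of the symmetrized Seifert form, is preserved when $\Sigma$ is included incompressibly into a larger Seifert surface; (ii) each $\Sigma_{2k+1}$ carries the torus knot $T_{2k+1}$ with surface framing $2k-1>0$; and (iii) a direct computation (\cref{unique-eigenvalue}, \cref{unique}) shows that $\tilde V_{\Sigma_{2n+1}}$ has a \emph{unique} positive eigenvalue, with one-dimensional eigenspace whose integral points are generated by $[T_{2n+1}]$. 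Hence if $\Sigma_{2m+1}\leq\Sigma_{2n+1}$, the class $[T_{2m+1}]$ has positive self-pairing and must be an integer multiple $c\,[T_{2n+1}]$, giving $2(2m-1)=c^2\cdot 2(2n-1)$ and so $c^2=\tfrac{2m-1}{2n-1}<1$, a contradiction. Without some such quantitative invariant attached to homology classes (rather than to subsurfaces up to isotopy), your ``rigidity of incompressible embeddings'' step has no foothold: incompressible subsurfaces of a fixed surface are extremely flexible, and a monodromy/Gabai-style analysis of where $\Sigma_{2m+1}$ could sit inside $\Sigma_{2n+1}$ is not carried out and would be hard to make work directly.

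On Part (1), your route differs from the paper's but the first half is viable: $\Sigma_{2n+1}$ is indeed the Bennequin surface of a strongly quasipositive braid word (the paper instead realizes it as the Legendrian ribbon of a Legendrian graph and cites \cite[Theorem 2.2]{baader2009graphs}). For fiberedness, however, your fallback is not valid: a monic Alexander polynomial of degree $2g$ is a necessary but not sufficient condition for fiberedness (fiberedness is detected by the top knot Floer group, not by its Euler characteristic), so that argument would not close. Your primary route --- exhibiting $\Sigma_{2n+1}$ as an iterated plumbing of positive Hopf bands and invoking Stallings --- would work if you can actually exhibit the plumbing structure, which you have not done. The paper instead verifies Giroux's contact cell decomposition criterion (\cref{thm:giroux}) for the Legendrian graph $G_{2n+1}$, which yields fiberedness and (via Hedden) quasipositivity simultaneously; this is where the real work of Part (1) lives, and it has no counterpart in your proposal.
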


\begin{figure}[ht]
	\centering
	\begin{overpic}[scale=.44]{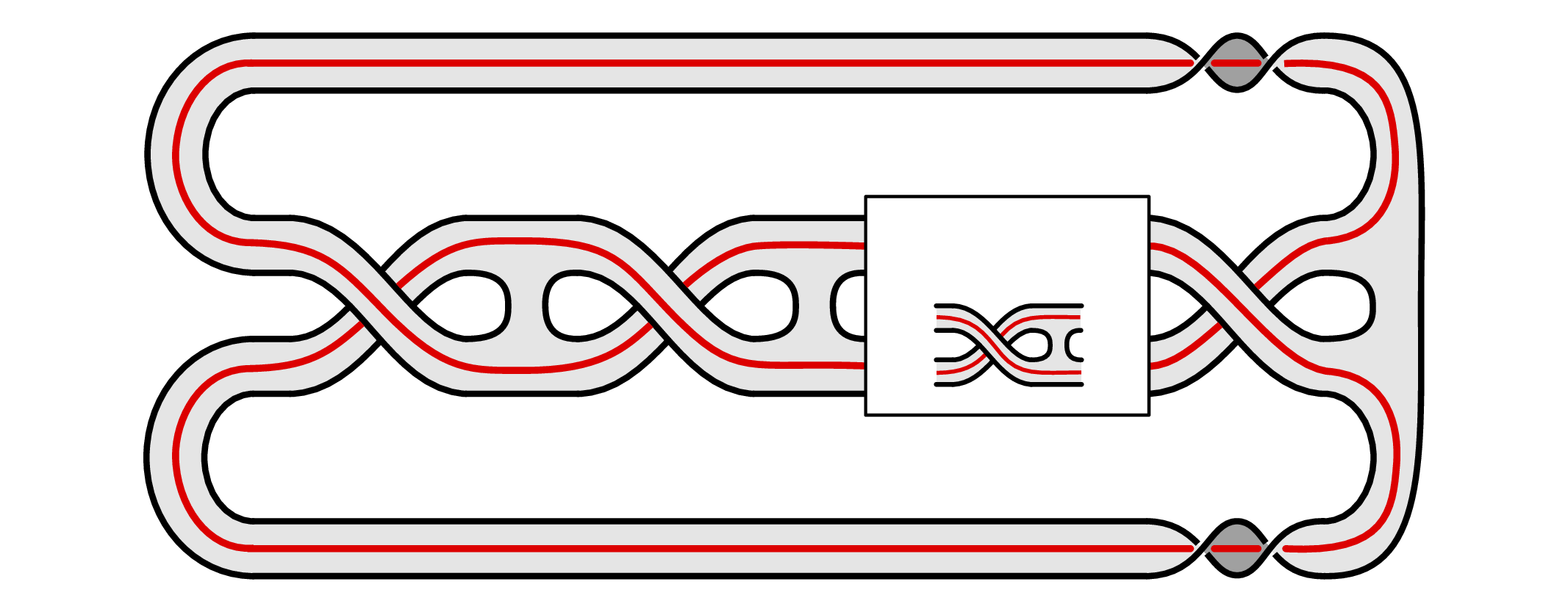}
        \put(1,34){\large $\Sigma_{2n+1}$}
        \put(17,9){\textcolor{darkred}{$T(2,2n+1)$}}
        \put(56.25,22){\footnotesize $2n-2$ copies of:}
	\end{overpic}
	\caption{The family of quasipositive fiber surfaces in $S^3$ in \cref{thm:main}. The red curve embedded in $\Sigma_{2n+1}$ is a $(2,2n+1)$ torus knot.}
	\label{fig:torus_family}
\end{figure}

\subsection{Organization} In \cref{sec:background}, we provide some background on contact structures and Legendrian knots. In \cref{sec:qp_fib} we use contact topology to prove (1) of \cref{thm:main}; namely, the fact that the surfaces $\Sigma_{2n+1}$ are quasipositive and fibered. Finally, we perform Seifert form calculations in \cref{sec:non_wqo} to verify (2) of \cref{thm:main}.

\subsection{Acknowledgments} The authors would like to thank Keiko Kawamuro and Sebastian Baader for commenting on a preliminary draft. 

\section{Background}\label{sec:background}

Here we cover some brief background as is needed in subsequent sections, primarily on contact topology. Standard contact references include the book of Geiges \cite{geiges2008introduction} and the lecture notes of Etnyre \cite{etnyre2005surveyknots,etnyre2006lectures}.

\subsection{Knot and braids} 

A \textit{framing} of a knot $K\subset S^3$ is a trivialization of its normal bundle. Such a framing is determined by a nonzero section of the normal bundle, hence by a nonvanishing transverse vector field defined locally along $K$ or equivalently a choice of parallel copy of $K$. One can associate an integer to a choice of framing as follows. First, choose an oriented Seifert surface $F$ for $K$, and orient $K$ as $\partial F$. Choose a parallel pushoff $K'$ of $K$ which represents the desired framing, and orient it so that its direction agrees with that of $K$. The \textit{framing integer} associated to the framing is then by definition the algebraic intersection number $K' \cdot \Sigma$. Note that the \textit{Seifert framing}, the framing determined by a parallel pushoff of $K$ in the direction of any Seifert surface, is the $0$-framing. 

More generally, the \textit{linking number} between two oriented knots $K_1, K_2\subset S^3$ is defined by $\ell k(K_1, K_2):=K_2 \cdot \Sigma_1$, where $\Sigma_1$ is a Seifert surface for $K_1$. Diagrammatically, we have 
\[
\ell k(K_1, K_2) = \frac{(\textrm{\# positive $K_1,K_2$ crossings}) - (\textrm{\# negative $K_1,K_2$ crossings})}{2}
\]
where a positive (resp. negative) crossing has an overpass going to the right (resp. left). 

A link $L\subset S^3$ is \textit{fibered} if $S^3 \setminus L$ fibers over $S^1$, such that each fiber compactifies to a Seifert surface of $L$. A link is \textit{strongly quasipositive} if it is isotopic the closure of a \textit{strongly quasipositive braid} --- one which is a positive word in the conjugates $a_{ij}:=(\sigma_{j-1}\sigma_{j-2}\cdots \sigma_{i+1})\sigma_{i}(\sigma_{j-1}\sigma_{j-2}\cdots \sigma_{i+1})^{-1}$ of the standard Artin generators, a class which was first introduced and studied by Rudolph \cite{rudolph1990congruence,rudolph1992characterization,rudolph1993quasipositivity}.  Given such a braid word, one can identify a canonical Seifert surface for its closure by starting with disks corresponding to the number of strands in the braid and attaching negatively-twisted bands between strands $i$ and $j$ associated to each letter $a_{ij}$; see \cref{fig:SQP}. We call any surface arising this way (the canonical Seifert surface associated to a strongly quasipositive braid closure) a \textit{quasipositive} surface.

\begin{figure}[ht]
	\centering
	\begin{overpic}[scale=.34]{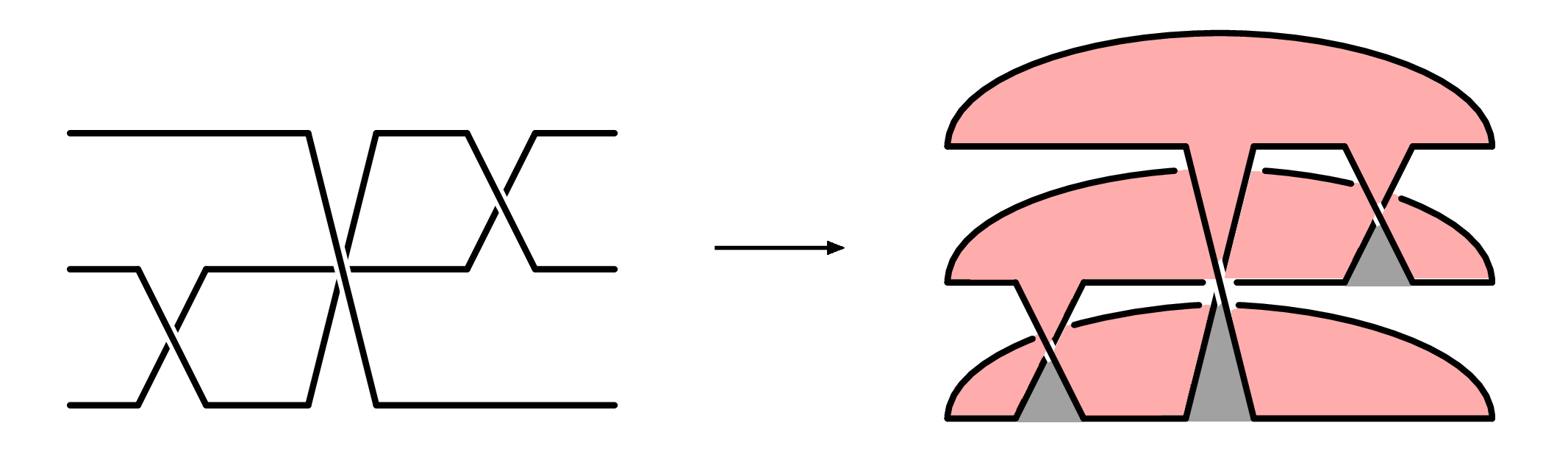}
        
	\end{overpic}
	\caption{The strongly quasipositive braid $a_{12}a_{13}a_{23}$ and the quasipositive Seifert surface of its closure.}
	\label{fig:SQP}
\end{figure}

\subsection{Contact structures}

A \textit{(co-oriented and positive) contact structure} on an oriented $3$-manifold $M$ is a plane distribution $\xi$ for which there exists a globally defined $1$-form $\alpha$ satisfying $\xi = \mathrm{ker}(\alpha)$ and $\alpha \wedge d\alpha > 0$. Two contact manifolds $(M,\xi)$ and $(N,\zeta)$ are \textit{contactomorphic} if there exists a diffeomorphism $\varphi : M \to N$ such that $\varphi_*(\xi) = \zeta$.

There are in general many contact structures on any chosen $3$-manifold. Focusing on $\mathbb R^3$ with coordinates $(x,y,z)$, the \textit{standard} contact structure is the kernel of $\alpha_{\mathrm{st}} := dz - y\, dx$. This compactifies to a contact structure on $S^3$, also called the \textit{standard} structure, which is contactomorphic to the plane field of complex tangencies $T_pS^3 \cap i(T_pS^3)$ to the unit sphere $S^3 \subset \C^2$. A structure non-contactomorphic to $\xi_{\mathrm{st}}$ on $\mathbb R^3$ is given in cylindrical coordinates by $\xi_{\mathrm{OT}}:=\ker(\cos(r)\, dz + r\sin(r)\, d\theta)$. The feature that distinguishes $\xi_{\mathrm{OT}}$ from $\xi_{\mathrm{st}}$ is the presence of an \textit{overtwisted disk}, an embedded disk which is tangent to the contact structure along its boundary. In $\xi_{\mathrm{OT}}$ this is given by $D = \{z=0,r\leq \pi\}$. Any contact structure which lacks an overtwisted disk (such as $\xi_{\mathrm{st}}$) is called \textit{tight}.

An \textit{abstract open book} $(\Sigma, \phi)$ is a compact oriented surface $\Sigma$ with non-empty boundary, called the \textit{page}, and a diffeomorphism $\phi: \Sigma \to \Sigma$ fixing $\partial \Sigma$ pointwise, called the \textit{monodromy}. Given an abstract open book $(\Sigma,\phi)$ we define a $3$-manifold $M_{(\Sigma,\phi)}$ as follows. Consider the mapping torus $M_\phi := \Sigma \times [0,1]_t / \sim$ where $(x,1) \sim (\phi(x),0)$. Since the diffeomorphism fixes the $\partial \Sigma$ pointwise, $\partial M_{\phi}$ is a disjoint union of tori. We obtain $M_{(\Sigma,\phi)}$ by gluing in solid tori $D^2 \times S^1$ along $\partial M_{\phi}$ so that for each point $p\in \partial \Sigma$, the circle $\{p\}\times S^1 \subset \partial M_{\phi}$ bounds a meridian disk of $D^2 \times S^1$. 

In fact, Thurston-Winkelnkemper \cite{thurston1975existence} showed that $M_{(\Sigma, \phi)}$ inherits a well-defined contact structure $\xi_{(\Sigma,\phi)}$. Briefly, since $\Sigma$ is a compact orientable surface with boundary, we may endow it with an exact symplectic form $d\lambda$; the mapping torus $M_{\phi}$ then inherits a contact structure given by the contactization $\ker(dt + \lambda)$. One can then show that it is possible to fill in the solid tori with a standard contact model to define a contact structure on $M_{(\Sigma, \phi)}$. If $(M, \xi)$ is contactomorphic to $(M_{(\Sigma, \phi)}, \xi_{(\Sigma,\phi)})$, we say that $(M, \xi)$ is \textit{supported} by the abstract open book $(\Sigma, \phi)$. 

By construction, the link $B:= \bigsqcup\,\,(\{0\}\times S^1)$ in $M_{(\Sigma, \phi)}$ is a fibered link, and conversely, any fibered link $B$ in a $3$-manifold yields a diffeomorphism with $M_{(\Sigma,\phi)}$ where $\Sigma$ is diffeomorphic to a Seifert surface for $B$ and $\phi$ is the monodromy of the fibration. Thus, a choice of fibered link $B\subset M$ is also called an \textit{open book decomposition} of $M$ and $B$ is called the \textit{binding}. Consequently, a choice of fibered link in a $3$-manifold induces a contact structure by passing through the corresponding abstract open book $(\Sigma, \phi)$ and Thurston-Winkelnkemper's construction. 

As referenced in the introduction, work of Hedden \cite{hedden2010notions} shows that a fibered link is strongly quasipositive if and only if its corresponding open book decomposition supports the standard contact structure on $S^3$.

\subsection{Legendrian knots}

A \textit{Legendrian} knot $\Lambda\subset (M, \xi)$ in a contact manifold is a smooth knot that is always tangent to $\xi$; that is, $T_p\Lambda \in \xi_p$ for all $p\in \Lambda$. One typically studies Legendrian knots up to \textit{Legendrian isotopy}, i.e. isotopy through Legendrian knots.

There are two geometrically meaningful projections of a Legendrian knot in $(\mathbb R^3, \xi_{\mathrm{st}})$: the \textit{front projection} to the $(x,z)$-plane, and the \textit{Lagrangian projection} to the $(x,y)$-plane. Front projections have semi-cubical cusps in place of vertical tangencies, and the $y$-coordinate can be recovered from $y=\frac{dz}{dx}$. In particular, overcrossings always have more negative slope that undercrossings.

Given a front projection of a Legendrian knot, one can obtain (up to Legendrian isotopy) its Lagrangian projection (up to planar isotopy) by smoothing the cusps in the front projection according to \cref{fig:ng-resolution}; see \cite{ng2003resolution}.

\begin{figure}[ht]
	\centering
	\begin{overpic}[scale=.27]{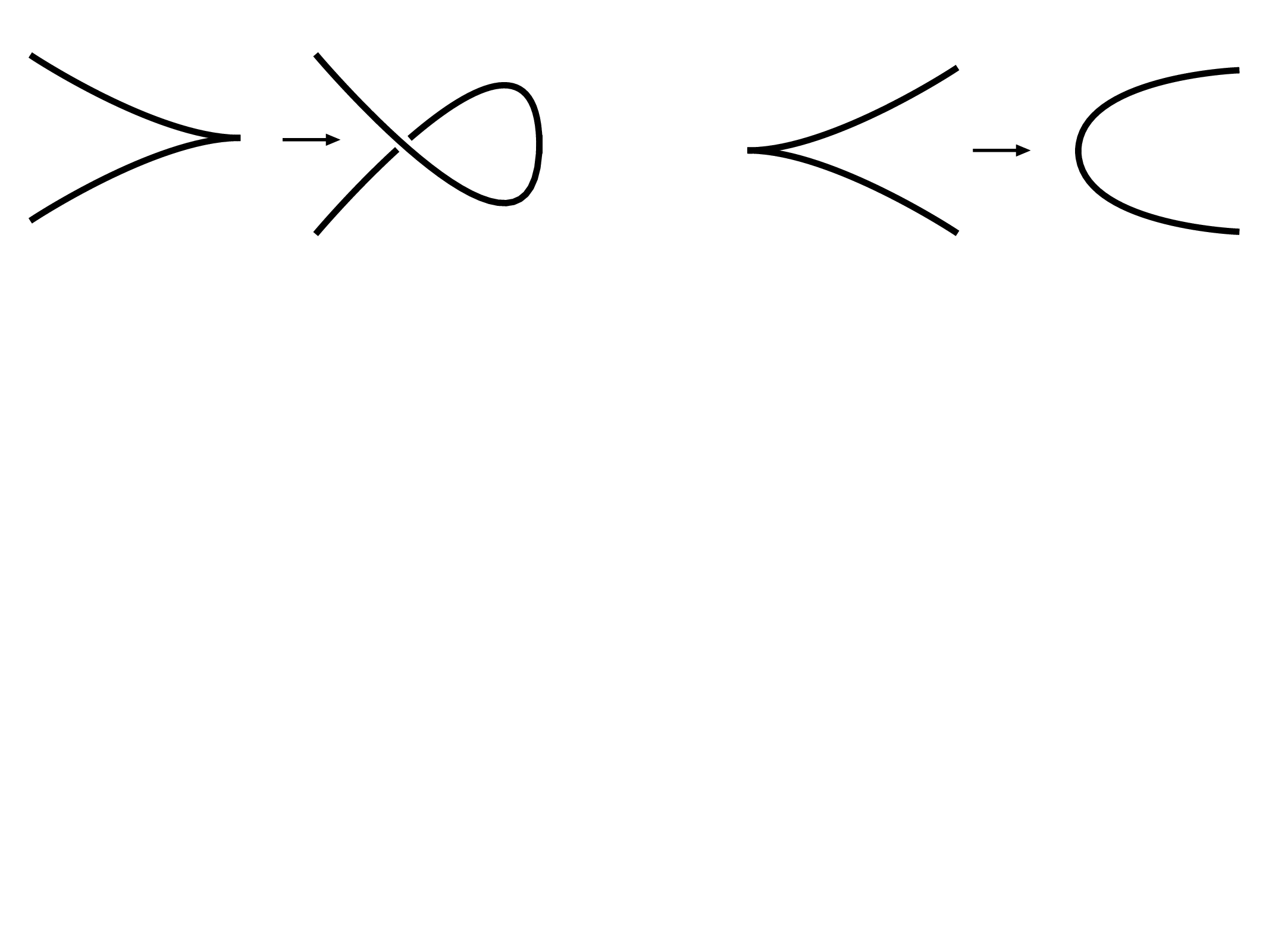}
        
	\end{overpic}
    \vskip-5cm
	\caption{The Ng resolution of a front projection.}
	\label{fig:ng-resolution}
\end{figure}

A Legendrian knot $\Lambda$ is equipped with a framing coming from the contact structure; specifically, given by a parallel pushoff of $\Lambda$ in a direction transverse to $\xi$. The associated framing integer is known as the \textit{Thurston-Bennequin number}, denoted $\mathrm{tb}(\Lambda)$, and is one of the so-called classical invariants of a Legendrian knot. We will appeal to two facts about the contact framing and the Thurston-Bennequin number: 
\begin{enumerate}
    \item In the front projection, $\mathrm{tb}(\Lambda)$ can be computed combinatorially as 
    \begin{align*}
        \mathrm{tb}(\Lambda) &= \textrm{writhe} - \frac{1}{2}(\# \textrm{ of cusps}) \\
        &= (\# \textrm{ of pos. crossings}) - (\# \textrm{ of neg. crossings}) -\frac{1}{2}(\# \textrm{ of cusps}).
    \end{align*}
    For example, the Legendrian unknot $U$ pictured in the top row of \cref{fig:torus_knot} has $\mathrm{tb}(U) = -1$. The Legendrian $(2,2n+1)$-torus knot pictured in the second row has $2n+1$ positive crossings and $4$ cusps, hence $\mathrm{tb}(\Lambda) = 2n-1$. These knots are referred to as the \textit{max-tb} unknot and $(2,2n+1)$-torus knot, respectively, as they maximize the Thurston-Bennequin number in their topological knot type in $(S^3, \xi_{\mathrm{st}})$.

    \item In the Lagrangian projection, the contact framing of a Legendrian knot coincides with the blackboard framing; indeed the vector field $\partial_z$ (which is projected out in the Lagrangian projection) is transverse to $\xi_{\mathrm{st}} = \ker(dz-y\,dx).$
\end{enumerate}

\begin{figure}[ht]
	\centering
	\begin{overpic}[scale=.4]{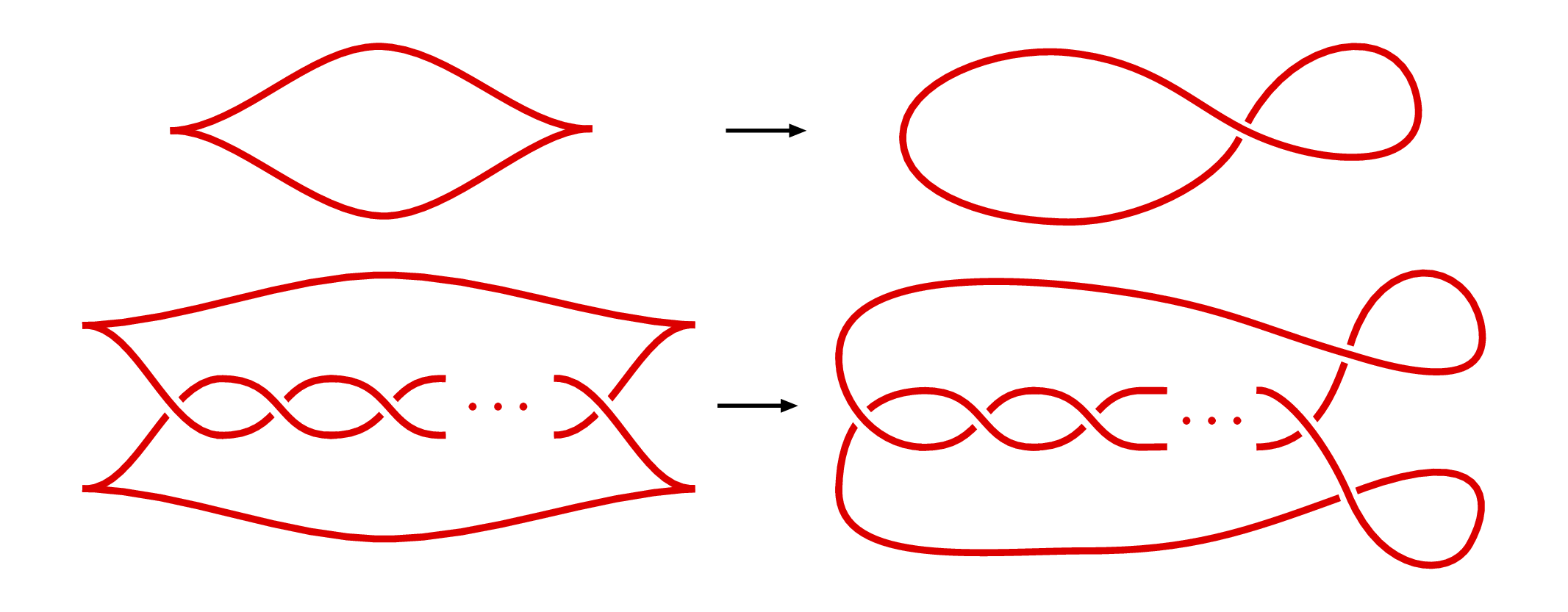}
       
	\end{overpic}
    
	\caption{In the top row, a Legendrian unknot in the front (left) and Lagrangian (right) projection. In the bottom row, a Legendrian $(2,2n+1)$-torus knot in the front (left) and Lagrangian (right) projection.}
	\label{fig:torus_knot}
\end{figure}

\section{Proof of quasipositive fiberedness}\label{sec:qp_fib}

In this section we prove that $\Sigma_{2n+1}$ is a quasipositive fiber surface, establishing (1) of \cref{thm:main}. In short, the surface $\Sigma_{2n+1}$ is the result of applying an algorithm of Avdek \cite{avdek2013surgery} --- see also the procedure in \cite[\S 3]{islambouli2024multisections} --- to the max-tb Legendrian $(2,2n+1)$ torus knot. Avdek's algorithm produces an open book decomposition supporting the tight contact structure on $S^3$ for which the input knot is embedded on a page, such that the page-framing of the knot coincides with the contact framing; the resulting page is our quasipositive fiber surface $\Sigma_{2n+1}$. We provide additional details to give a mostly self-contained exposition.  

\begin{remark}
Quasipositive fiberedness holds for the corresponding even-indexed surfaces $\Sigma_{2n}$ as well. We ultimately restrict to odd indices for the convenience that $\Lambda_{2n+1}$ is a knot, rather than a link.    
\end{remark}

\begin{figure}[ht]
	\centering
	\begin{overpic}[scale=.44]{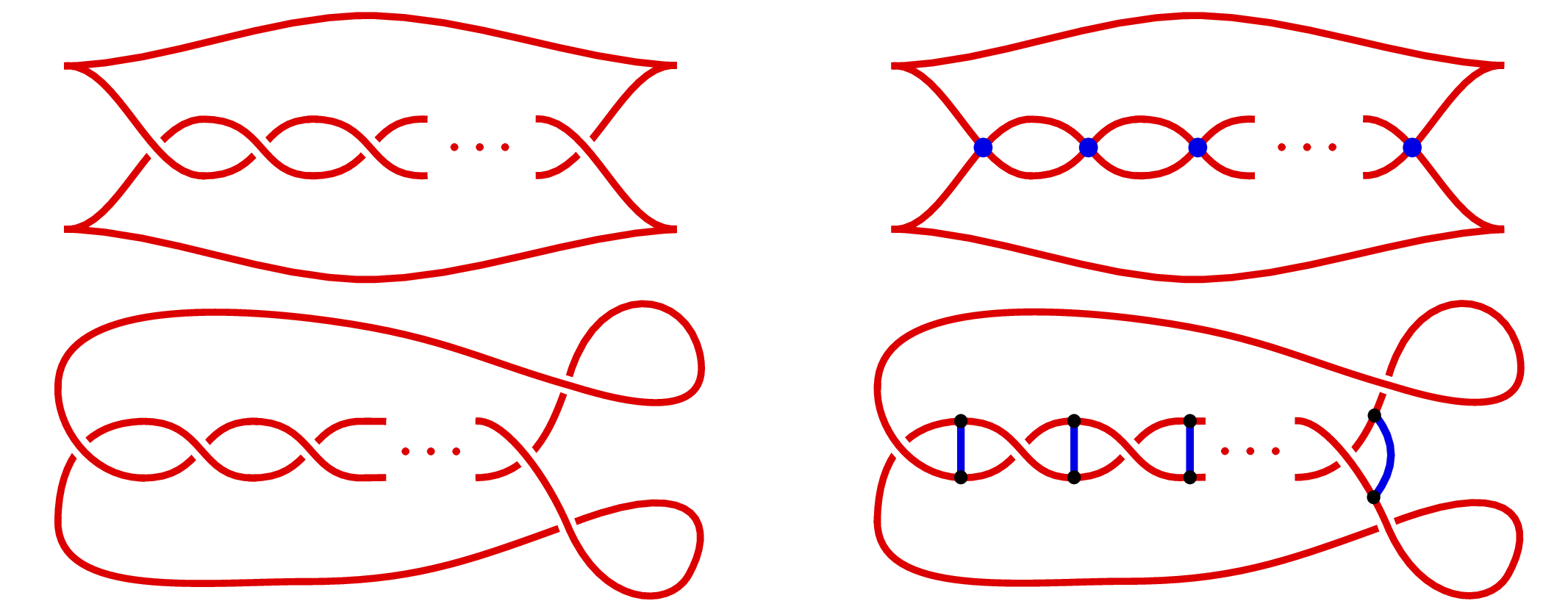}
        \put(0,20.5){\small \textcolor{darkred}{$\Lambda_{2n+1}$}}

        \put(52.5,20.5){\small \textcolor{black}{$G_{2n+1}$}}
	\end{overpic}
	\caption{}
	\label{fig:torus_family_graph}
\end{figure}

Let $\Lambda_{2n+1}$ be the max-tb Legendrian $(2,2n+1)$ torus knot, whose front projection (with $2n+1$ crossings) in $(\R^3, dz - y\, dx)$ is given by the top left of \cref{fig:torus_family_graph}. Let $G_{2n+1}$ be the Legendrian graph obtained by attaching edges parallel to the $y$-axis at each crossing; these edges, invisible in the front projection, are indicated by the blue dots in the top right of the figure. The graph, up to Legendrian graph isotopy, may be visualized in the Lagrangian projection by first performing the Ng resolution of the front projection, indicated in the bottom left of \cref{fig:torus_family_graph}, and then attaching the indicated edges as in the bottom right of the figure.

Let $\Sigma_{2n+1}$ be the \textit{Legendrian ribbon} of $G_{2n+1}$, i.e. the surface, unique-up-to-isotopy, which retracts onto $G_{2n+1}$, is tangent to $\xi_{\mathrm{st}}$ along $G_{2n+1}$, and is transverse to $\xi_{\mathrm{st}}$ away from $G_{2n+1}$; see \cite[\S 2]{baader2009graphs}, or \cite{Giroux2002GeometrieDC,etnyre2006lectures}. One may identify $\Sigma_{2n+1}$ in the Lagrangian projection by taking the blackboard push-off of the graph $G_{2n+1}$ in the bottom right of \cref{fig:torus_family_graph}, as the contact framing agrees with the blackboard framing in this projection. The resulting surface is indeed isotopic to $\Sigma_{2n+1}$ as shown in \cref{fig:torus_family}. Since $\Sigma_{2n+1}$ is the Legendrian ribbon of a Legendrian graph, \cite[Theorem 2.2]{baader2009graphs} implies that $\Sigma_{2n+1}$ is a quasipositive surface. 

It remains to show that $\Sigma_{2n+1}$ is a fiber surface. To that end, we will show that $G_{2n+1}$ is the $1$-skeleton of a suitable \textit{contact cell decomposition} of $(S^3, \xi_{\mathrm{st}})$ and appeal to the following theorem, originally due to Giroux \cite{Giroux2002GeometrieDC}, with a proof explained by Etnyre \cite{etnyre2006lectures}, and stated by Avdek as \cite[Theorem 2.4]{avdek2013surgery}. (Applying this theorem provides a separate and independent proof that $\Sigma_{2n+1}$ is quasipositive, thanks to Hedden \cite{hedden2010notions}.)

\begin{theorem}\cite{Giroux2002GeometrieDC}\label{thm:giroux}
Suppose that a closed co-oriented contact manifold $(M,\xi)$ has a cell decomposition with the following properties. 
\begin{enumerate}
    \item The $1$-skeleton $G$ is a Legendrian graph.
    \item Each $2$-cell $D$ is a convex (in the sense of \cite{giroux1991convexite}) disk with piecewise-Legendrian boundary such that, after performing the unique Legendrian smoothing of $\partial D$, we have $\mathrm{tb}(\partial D) = -1$. 
    \item If $\Sigma$ denotes the Legendrian ribbon of $G$, then for every $2$-cell $D$, we have $|D\cap \partial \Sigma| = 2$.
    \item Every $3$-cell is tight. 
\end{enumerate}
Then $\Sigma$ is the page of a supporting open book decomposition of $(M, \xi)$. 
\end{theorem}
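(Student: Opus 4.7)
The plan is to construct an open book decomposition $(B,\pi)$ of $M$ with binding $B=\partial\Sigma$ and page $\Sigma$, and then invoke Giroux compatibility to conclude that it supports $\xi$. Concretely, the goal reduces to producing a fibration $\pi:M\setminus\partial\Sigma\to S^1$ whose generic fiber is isotopic to $\Sigma$ and such that $\xi$ is isotopic to the Thurston-Winkelnkemper contact structure of the resulting abstract open book.

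First I would set up local models on the 2-cells. By hypothesis (2), after Legendrian smoothing $\partial D$ has $\mathrm{tb}(\partial D)=-1$, so each 2-cell $D$ may be arranged to be a convex disk whose characteristic foliation has a single elliptic singularity and whose dividing set is a single properly embedded arc. Hypothesis (3), which says $|D\cap \partial\Sigma|=2$, combined with the Legendrian Realization Principle and a standard convex isotopy rel boundary, allows me to arrange the dividing arc of $D$ to coincide with the properly embedded arc $D\cap \Sigma$, so that $\Sigma$ cuts each 2-cell into two half-disks whose boundaries alternate between pieces of $G$ and the two points of $D\cap\partial\Sigma$. Cutting each 3-cell $B$ along $B\cap \Sigma$ then produces half-balls whose boundaries are assembled from pieces of $\Sigma$, from these half-disks, and from $\partial B$. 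By tightness of each 3-cell (hypothesis (4)) together with Eliashberg's uniqueness of tight contact structures on the ball, each resulting half-ball is contactomorphic to a standard contact mapping cylinder $P\times[0,1]$ for $P$ a sub-region of the page, with product structure realized by flowing along an adapted contact vector field transverse to the convex disks.

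Reassembling these local products across all 3-cells yields the desired fibration $\pi:M\setminus\partial\Sigma\to S^1$, with a monodromy $\phi$ fixing $\partial\Sigma$ pointwise. Giroux compatibility is then automatic: $\partial\Sigma$ is positively transverse to $\xi$ as the boundary of a Legendrian ribbon, and away from the binding each page is transverse to $\xi$ and inherits an induced exact symplectic form from Stokes's theorem on the contact form. Because the resulting Thurston-Winkelnkemper contact structure agrees by construction with $\xi$ on each tight 3-cell, the two structures are isotopic globally. The main obstacle I expect is in the middle step: ensuring that the local half-ball product structures can be chosen coherently so that they glue across 2-cells into a genuine $S^1$-fibration, rather than a multivalued or branched structure with extra singular fibers. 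This is precisely where hypothesis (4) is essential, since tightness rules out the pathological characteristic foliations on 3-cell boundaries that would obstruct the consistent choice of transverse contact vector fields needed for the gluing.
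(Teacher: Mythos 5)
The paper does not actually prove this statement: it is quoted from Giroux, with the proof attributed to \cite{Giroux2002GeometrieDC}, Etnyre's notes, and Avdek, and the only in-paper content is the remark sketching a contact Heegaard splitting $M = H_0\cup(M- H_0)$ with both handlebodies contactomorphic to $([-1,1]\times\Sigma,\ker(dt+\lambda))$. Measured against that route, your outline contains a genuine gap at exactly the step you flag yourself: assembling the half-ball product structures into a coherent $S^1$-fibration. Tightness of the $3$-cells lets you invoke Eliashberg's uniqueness on each piece, but it does not by itself force the locally chosen contact vector fields and product structures to match across the $2$-cells; that coherence is essentially the content of the theorem, so asserting that ``hypothesis (4) rules out the pathologies'' is circular. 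The standard argument avoids the problem by never building the fibration cell by cell: one takes $H_0=N(G)$ with convex boundary whose dividing set is $\partial\Sigma$, observes that the $2$-cells truncate to a complete system of compressing disks for the complementary handlebody $H=M- H_0$, each of whose boundaries meets the dividing set in exactly two points (this is what (2) and (3) jointly encode, via $\mathrm{tb}(\partial D)=-\tfrac12\#(\partial D\cap\Gamma)$), cuts $H$ along them to get tight balls, and concludes from Eliashberg uniqueness and the product-disk decompositions that $H$ is itself contactomorphic to $[-1,1]\times\Sigma$. The open book and its monodromy are then read off from the splitting; no gluing of local fibrations is required.

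Two smaller issues: the dividing arc you want to normalize is not ``$D\cap\Sigma$'' (which contains a collar of $\partial D$, since $\partial D\subset G\subset\Sigma$) but the dividing set of the truncated disk $D\cap H$, whose boundary meets $\partial\Sigma$ in the two points of hypothesis (3); and the final claim that Giroux compatibility is ``automatic'' needs the standard lemma that a contact form positive on the binding with $d\alpha$ positive on the pages implies supportedness --- transversality of the pages to $\xi$ alone does not give $d\alpha>0$ on the pages. Both points are repairable, but the middle gluing step is the real missing idea.
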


\begin{remark}
The idea behind \cref{thm:giroux} is that a standard tubular neighborhood of the Legendrian $1$-skeleton $G$ forms a handlebody $H_0$ contactomorphic to a contactization $([-1,1]_t\times \Sigma, \ker(dt + \lambda)$ of the (conformally) symplectic surface $(\Sigma, d\lambda)$; properties (2-4) ensure that the complement $M- H_0$ is also a handlebody contactomorphic to $[-1,1]\times \Sigma$. The two handlebodies glue together to form a contact Heegaard splitting that arises from a supporting open book decomposition. This perspective is further explained in, for instance, \cite{honda2009hkm_contact_invariant_sutured,ozbagci2011contact,islambouli2024multisections}.    
\end{remark}

\begin{proof}[Proof of (1) of \cref{thm:main}.]
We show that the Legendrian graph $G_{2n+1}$ is a $1$-skeleton for a cell decomposition of $(S^3, \xi_{\mathrm{st}})$ satisfying \cref{thm:giroux}. As $(S^3, \xi_{\mathrm{st}})$ is tight, (4) is automatic, so it remains to identify the $2$-cells and verify (2) and (3). We identify $2$-cells $D_1, D_2, \dots, D_{2n+1}, D_{2n+2}$ as follows. The cell $D_1$ (resp. $D_{2n+2}$) corresponds to the natural lift of the large lower (resp. upper) planar disk cut out by the front projection of $G_{2n+1}$ in the top right of \cref{fig:torus_family_graph}; in \cite[\S 4]{avdek2013surgery} these planar disks cut out by Legendrian graph projections are called \textit{elementary disks}. Up to a $C^0$-small perturbation relative to the smoothed boundaries, we may assume that each disk is convex \cite{honda2000classification}. The disk $D_{2n+2}$ is shaded in gray in the top right of \cref{fig:torus_family_disks} and is shaded in the Lagrangian projection in the bottom right. A slight inward-pushoff of the natural smoothing of $\partial D_{2n+2}$ is also drawn as a dashed black curve, and is evidently a $\mathrm{tb}=-1$ unknot. The disks $D_2, \dots, D_{2n+1}$ correspond to the smaller inner elementary disks cut out by the twist region and the bridges in the front projection; $D_2$ and a pushoff of its $\mathrm{tb}=-1$ boundary are drawn on the left side of \cref{fig:torus_family_disks}. 

\begin{figure}[ht]
	\centering
	\begin{overpic}[scale=.44]{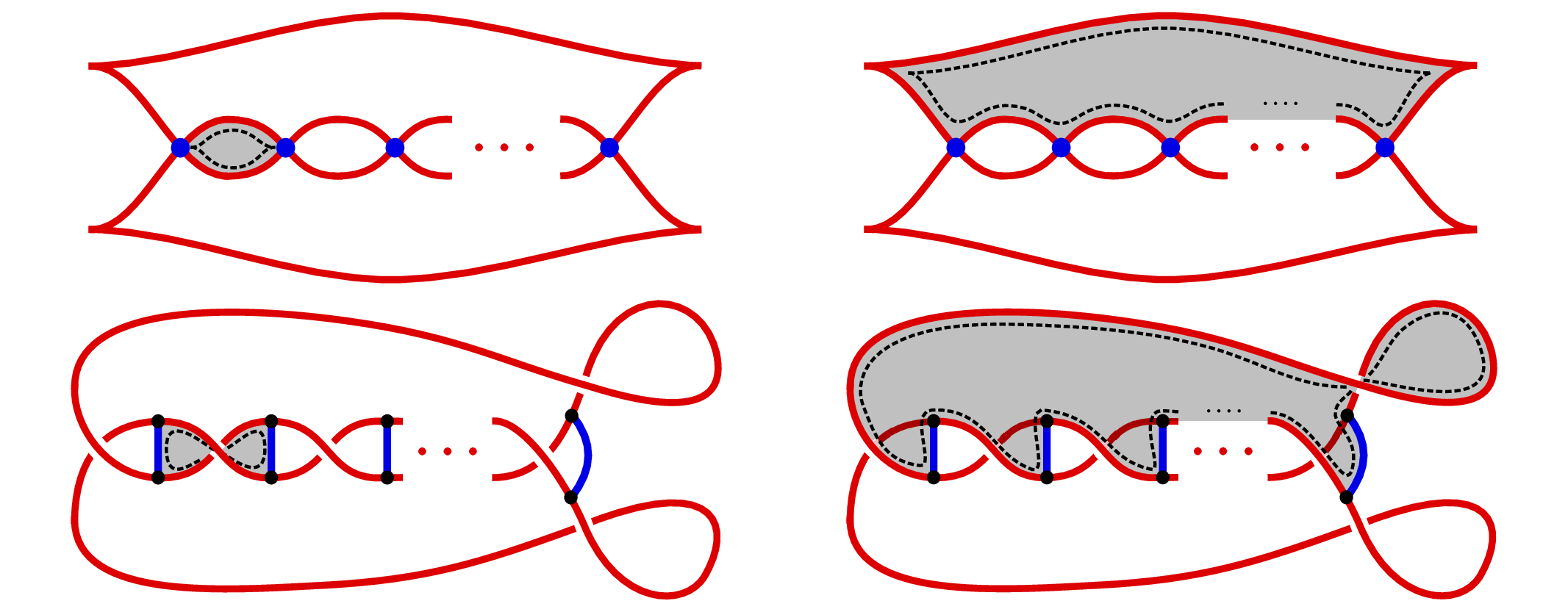}

        \put(63.5,15){\small $D_{2n+2}$}
        \put(70.5,34){\small $D_{2n+2}$}
	\end{overpic}
	\caption{}
	\label{fig:torus_family_disks}
\end{figure}

The argument then proceeds as in \cite[\S 4]{avdek2013surgery}. First, observe that the union of the $1$-skeleton and $2$-skeleton, $G_{2n+1} \cup \bigcup D_i$, is contractible. Its complement is then homeomorphic to a $3$-ball, so to complete a cell decomposition of $S^3$ we only need to attach a single $3$-cell. As already observed, tightness of the $3$-cell is automatic from tightness of the ambient contact structure.

It remains to check the intersection condition (3) of \cref{thm:giroux}. The verification for the disks $D_2, \dots, D_{2n+1}$ is described in \cref{fig:intersection_1}. In particular, beginning with the local structure of $G_{2n+1}$ near one such disk in the Lagrangian projection (far left), we identify $\partial \Sigma_{2n+1}$ in black via a small blackboard pushoff (middle). Up to a generic perturbation of the disk, $\partial \Sigma_{2n+1}$ punctures the disk twice transversely, as indicated. The far right figure is a further smooth isotopy of $\partial \Sigma_{2n+1}$ included for topological clarification. 

\begin{figure}[ht]
	\centering
	\begin{overpic}[scale=.42]{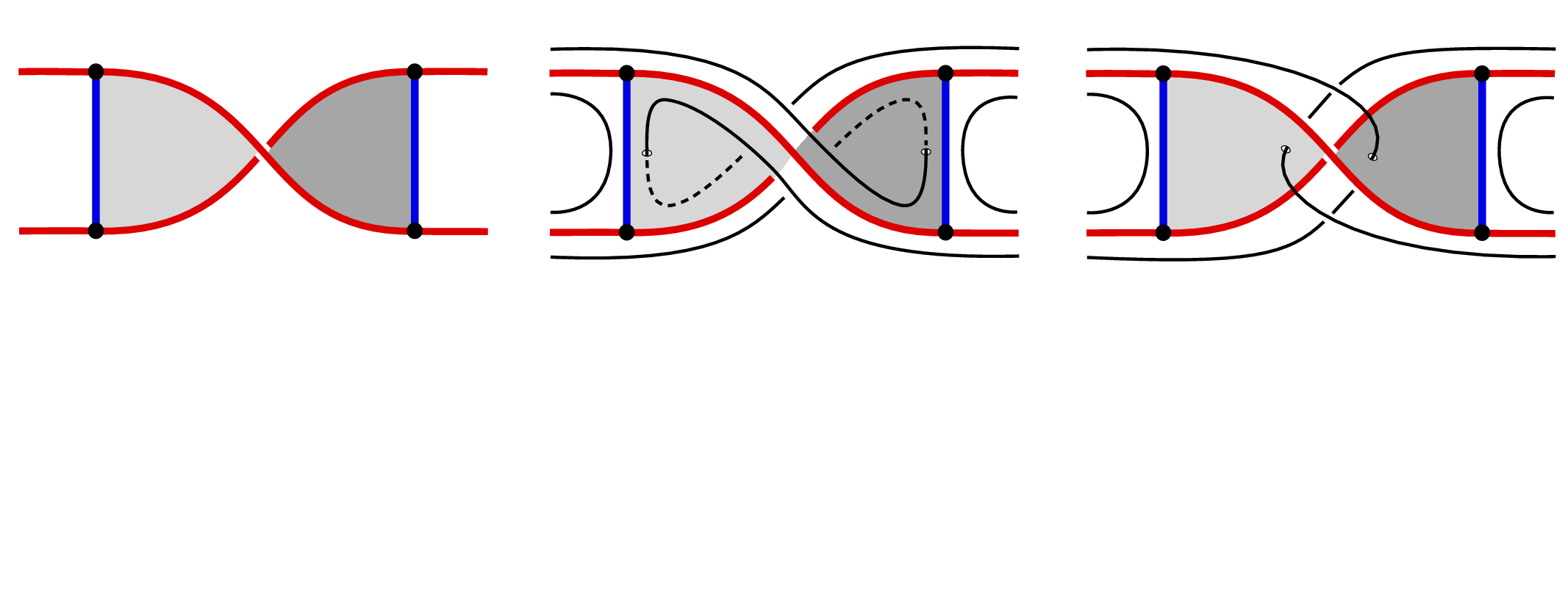}
      
	\end{overpic}
    \vskip-3cm
	\caption{}
	\label{fig:intersection_1}
\end{figure}

The verification for the disk $D_{2n+2}$ is contained in \cref{fig:intersection_2}. The different components of the blackboard pushoff of $G_{2n+1}$ are drawn in the top portion of the figure in green, blue, and black for visual distinction. The two intersections with $D_{2n+2}$ (one in black, one in green) are indicated near the left and right sides of $D_{2n+2}$. The other components of $\partial \Sigma_{2n+1}$ --- for example, the component in blue labeled $K$ --- may be taken to sit below $D_{2n+2}$ without any necessary intersections. The lower part of the figure performs a topological isotopy on such a component for clarification. By symmetry, the verification of the intersection condition for the lower disk $D_1$ is identical to that of $D_{2n+2}$.
\end{proof}

\begin{figure}[ht]
	\centering
	\begin{overpic}[scale=.42]{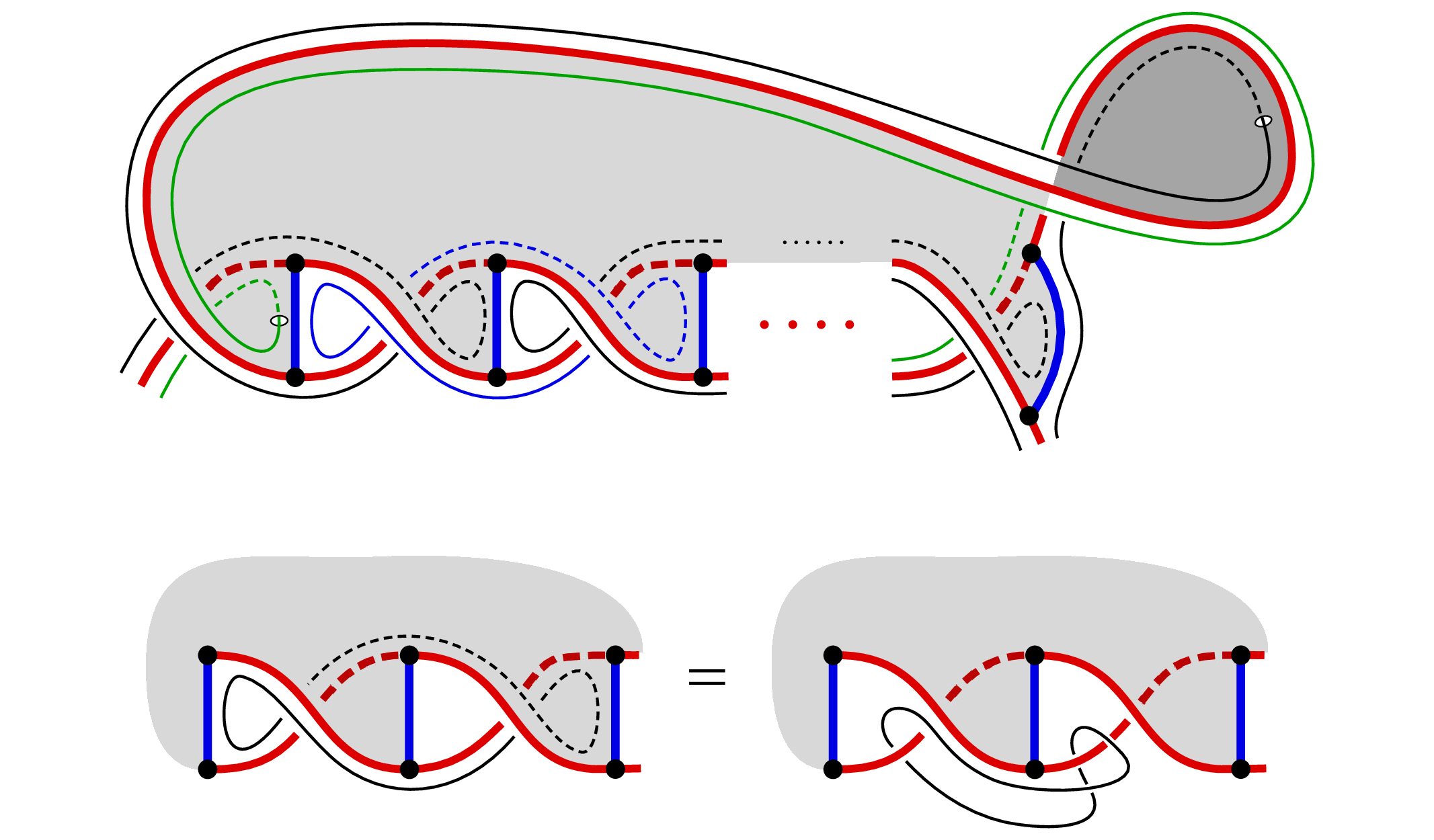}
      \put(33.5,28){\textcolor{darkblue}{$K$}}

      \put(33.5,46){$D_{2n+2}$}
	\end{overpic}
	\caption{}
	\label{fig:intersection_2}
\end{figure}

\section{Proof of non-well-quasi-orderedness}\label{sec:non_wqo}

Associated to a Seifert surface $\Sigma\subset S^3$ is its \textit{Seifert form}
\[
V_{\Sigma} : H_1(\Sigma; \mathbb Z) \times H_1(\Sigma;\mathbb Z) \to \mathbb Z,
\]
defined for simple closed oriented curves $a,b \subset \Sigma$ by $V_{\Sigma}([a],[b]) := \ell k(a,b^+)$, where $b^+$ is a push-off in the positive normal direction of $\Sigma$. We will denote an unconventional symmetrization (differing by an inconsequential factor of $\frac{1}{2}$ from the usual one) of the Seifert form by
\[
\tilde V_{\Sigma}:= V_{\Sigma}+ V_{\Sigma}^T
\]
where $V_{\Sigma}^T([a],[b]) := \ell k(b,a^+)$.

\begin{proposition}\label{unique-eigenvalue}
For each $n\in \mathbb N$, there is a basis of $H_1(\Sigma_{2n+1}; \Z)\cong \Z^{2n+2}$ consisting of simple closed curves for which the symmetrized Seifert form $\tilde{V}_{2n+1}:= \tilde{V}_{\Sigma_{2n+1}}$ is represented by the $(2n+2) \times (2n+2)$ matrix
\[
\tilde{A}_{2n+1} := 
\begin{pmatrix}
-2 & 1 & 1 & \cdots & 1 & 2n+1 \\
1 & -2 & 0 & \cdots & 0 & -1 \\
1 & 0 & -2 & \cdots & 0 & -1 \\
\vdots & \vdots & \vdots & \ddots & \vdots & \vdots\\
1 & 0 & 0 & \cdots & -2 & -1 \\
2n+1 & -1 & -1 & \cdots & -1 & -2
\end{pmatrix}.
\]
\end{proposition}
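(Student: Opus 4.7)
The plan is to exhibit the required basis explicitly using the graph spine structure from \cref{sec:qp_fib}, and then compute the Seifert form entries diagrammatically. Since $\Sigma_{2n+1}$ is the Legendrian ribbon of $G_{2n+1}$ (the max-tb Legendrian $(2,2n+1)$ torus knot with a vertical chord at each of its $2n+1$ crossings), we have $H_1(\Sigma_{2n+1}) \cong H_1(G_{2n+1})$; counting the $4n+2$ vertices (two per crossing) and $6n+3$ edges ($2n+1$ chords together with $4n+2$ knot arcs) of $G_{2n+1}$ gives first Betti number $2n+2$, matching the size of $\tilde A_{2n+1}$.

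The first step is to select the $2n+2$ basis cycles $a_1, \ldots, a_{2n+2}$. Guided by the structure of $\tilde A_{2n+1}$---every diagonal entry is $-2$ (suggesting that each $a_i$ should have $\mathrm{tb} = -1$), and only rows/columns $1$ and $2n+2$ interact with every other basis element---the natural choice is: $2n$ small ``rung'' cycles $a_2, \ldots, a_{2n+1}$ each supported in a neighborhood of a consecutive pair of chords in the twist region, together with two ``long'' cycles $a_1, a_{2n+2}$ that each traverse the full twist region once. Each of these can be read off directly from the Lagrangian projection at the bottom of \cref{fig:torus_family_graph}, and each is Legendrian-isotopic to a max-tb Legendrian unknot.

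The diagonal entries then follow from the defining property of a Legendrian ribbon: the surface framing of any Legendrian curve $K \subset \Sigma_{2n+1}$ agrees with its contact framing, so $V_{2n+1}(K, K) = \mathrm{tb}(K)$ (cf.\ \cref{sec:qp_fib}). Since each basis cycle is a max-tb Legendrian unknot, $\mathrm{tb}(a_i) = -1$, yielding $V_{2n+1}(a_i, a_i) = -1$ and $\tilde V_{2n+1}(a_i, a_i) = -2$ throughout.

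Finally, the off-diagonal entries $\tilde V_{2n+1}(a_i, a_j) = \ell k(a_i, a_j^+) + \ell k(a_j, a_i^+)$ would be computed diagrammatically in the Lagrangian projection, where the positive-normal pushoff $a_j^+$ coincides with the blackboard pushoff. The vanishing entries among $a_2, \ldots, a_{2n+1}$ are immediate from disjoint support in the twist region; the $\pm 1$ entries in the first and last rows/columns encode a single intersection on $\Sigma_{2n+1}$ coming from a shared band between a rung cycle and a long cycle, the opposite signs for $a_1$ versus $a_{2n+2}$ reflecting opposite traversal directions. The main obstacle is the single large entry $\tilde V_{2n+1}(a_1, a_{2n+2}) = 2n+1$, which records the cumulative linking of the two long cycles as they wind past all $2n+1$ crossings of the twist region; extracting both its magnitude and sign requires a careful accounting of over/undercrossings, pushoff orientations, and the negative twisting of the quasipositive bands through the full twist region, so that no incidental cancellations occur among the individual crossing contributions. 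Once this is established, comparison with $\tilde A_{2n+1}$ is immediate.
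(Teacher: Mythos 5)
Your approach is essentially the paper's: choose two ``long'' cycles and $2n$ small cycles localized in the twist region as a basis, get the diagonal entries from the Legendrian-ribbon/$\mathrm{tb}$ interpretation (the paper does this diagrammatically but records the same contact-topological reading in a footnote), and read off the remaining entries from the diagram. Your rank computation via the Betti number of the spine $G_{2n+1}$ is a harmless variant of the paper's Euler characteristic count for two annuli joined by $2n$ one-handles.

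The one substantive shortfall is that the off-diagonal entries are announced rather than computed. You correctly identify the $(1,2n+2)$ entry as the crux, but then write that extracting its magnitude and sign ``requires a careful accounting \dots so that no incidental cancellations occur,'' which is precisely the step a proof of this proposition must supply: without it you have not shown the form is $\tilde A_{2n+1}$ rather than some other matrix with the same zero pattern. The paper pins this down by computing the \emph{unsymmetrized} form first and observing the asymmetry $V_{2n+1}(\gamma_1,\gamma_{2n+2})=0$ while $V_{2n+1}(\gamma_{2n+2},\gamma_1)=2n+1$, the latter coming from $2(2n+1)$ positive crossings between $\gamma_{2n+2}$ and the pushoff $\gamma_1^+$, all of one sign --- so no cancellation issue arises. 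The same goes, to a lesser degree, for the signs of the $\pm 1$ entries: your heuristic (``opposite traversal directions'') matches the paper's answer, but a complete write-up needs the explicit crossing count in the Lagrangian or surface diagram. Since the exact value $2n+1$ and the sign pattern are exactly what drive the eigenvalue analysis in the following lemma, these computations cannot be deferred.
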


\begin{proof}
Note as in \cref{fig:torus_family} the surface $\Sigma_{2n+1}$ is comprised of two annuli $\mathcal{A}_1, \mathcal{A}_2$ with a subsequent attachment of $2n$ 1-handles. An Euler characteristic computation shows that
\[
\chi(\Sigma_{2n+1}) = \chi(\mathcal{A}_1) + \chi (\mathcal{A}_2) - 2n = -2n.
\]
As a result, $H_1(\Sigma_{2n+1}; \Z) \cong \mathbb Z^{2n+2}$. A basis $\{\gamma_1 , \dots , \gamma_{2n+2}\}$ of oriented curves is depicted in \cref{fig:torus_family_curves}.

\begin{figure}[ht]
	\centering
	\begin{overpic}[scale=.44]{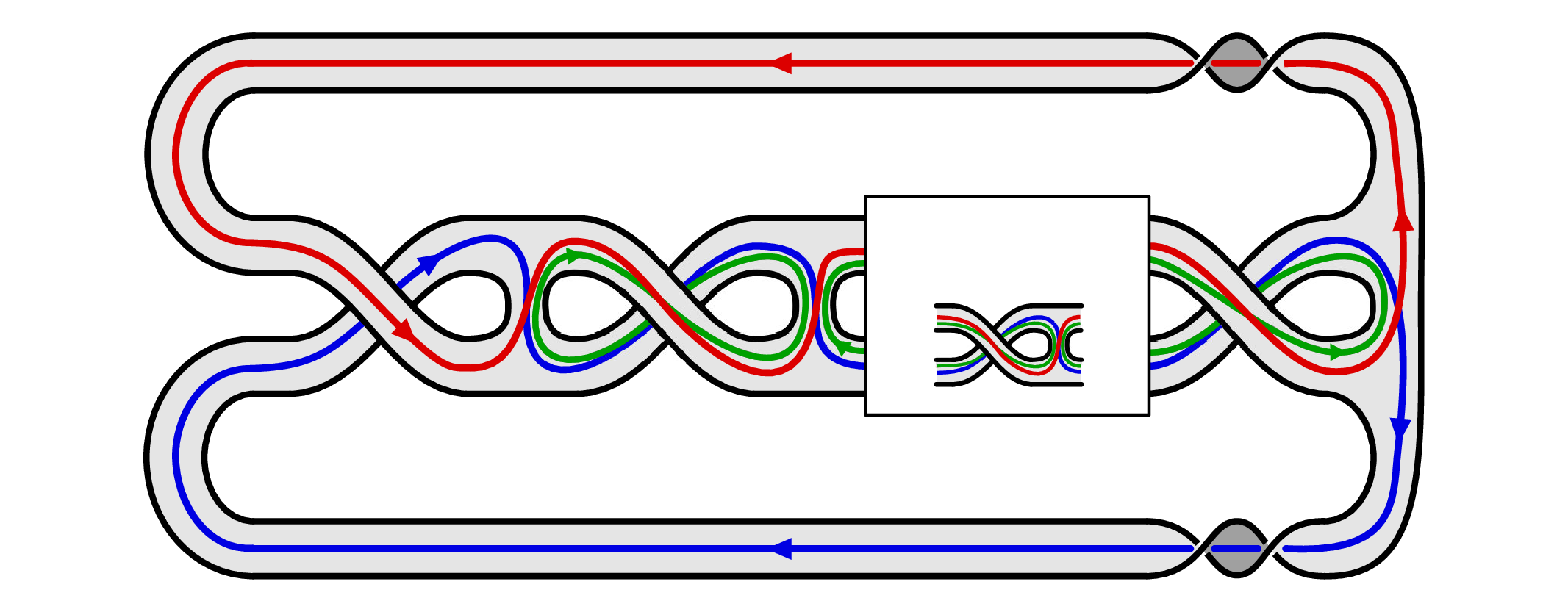}
        \put(1,34){\large $\Sigma_{2n+1}$}
        \put(16,9){\large \textcolor{darkblue}{$\gamma_1$}}
        \put(16,28.5){\large \textcolor{darkred}{$\gamma_{2n+2}$}}
        \put(41.5,13.5){\large \textcolor{darkgreen}{$\gamma_2$}}
        \put(75.5,12.5){\large \textcolor{darkgreen}{$\gamma_{2n+1}$}}
        \put(56.25,22){\footnotesize $2n-2$ copies of:}
	\end{overpic}
	\caption{}
	\label{fig:torus_family_curves}
\end{figure}

We first compute the matrix representative for $V_{2n+1} := V_{\Sigma_{2n+1}}$. Here we orient $\Sigma_{2n+1}$ so that its co-orientation points out of the page in the lightly shaded region, so that positive push-offs are accordingly out of the page. For each $i$, we then have $V_{2n+1}(\gamma_i, \gamma_i) = -1$: for $i=1,2n+2$ (i.e. blue and red), this is a consequence of the negative twist in the surface near the top right (resp. bottom left) region, while for $i=2, \dots, 2n+1$ (i.e. green) this is due to the negative crossing in the projection of each curve.\footnote{Assuming the basis elements are Legendrian, we have $V_{2n+1}(\gamma_i, \gamma_i) = \mathrm{tb}(\gamma_i)$ by definition of the Thurston-Bennequin number and the fact that $\Sigma$ is a Legendrian ribbon neighborhood of a Legendrian graph. Each knot is a max-tb unknot, giving a contact topological interpretation of $V_{2n+1}(\gamma_i, \gamma_i) = -1$.} By inspection, we see $V_{2n+1}(\gamma_1, \gamma_{2n+2}) = 0$, whereas $V_{2n+1}(\gamma_{2n+2}, \gamma_{1}) = 2n+1$, given the $2(2n+1)$ positive crossings of $\gamma_{2n+2}$ and $\gamma_1^+$. Inspection also gives $V_{2n+1}(\gamma_1, \gamma_{j}) = V_{2n+1}(\gamma_i,\gamma_{2n+2}) = V_{2n+1}(\gamma_i,\gamma_{j}) = 0$ for all $1\leq i,j\leq 2n+1$. Finally, by counting the signs of intersections of green with blue and red, we have $V_{2n+1}(\gamma_{2n+2},\gamma_j) = -1$ and $V_{2n+1}(\gamma_i,\gamma_1) = 1$ for $1\leq i,j\leq 2n+1$. This gives a matrix representative of $V_{2n+1}$ as 
\[
A_{2n+1} = (V_{2n+1}(\gamma_i,\gamma_j)) = 
\begin{pmatrix}
-1 & 0 & 0 & \cdots & 0 & 0 \\
1 & -1 & 0 & \cdots & 0 & 0 \\
1 & 0 & -1 & \cdots & 0 & 0 \\
\vdots & \vdots & \vdots & \ddots & \vdots & \vdots\\
1 & 0 & 0 & \cdots & -1 & 0 \\
2n+1 & -1 & -1 & \cdots & -1 & -1
\end{pmatrix}.
\]
The symmetrized matrix $\tilde{A}_{2n+1} = A_{2n+1} + A^{T}_{2n+1}$ then follows immediately. 
\end{proof}

We will use the following classical theorem to estimate the signature of the symmetrized Seifert form. 

\begin{theorem}[Gershgorin Circle Theorem]\cite{Gershgorin}\label{thm:gershgorin}
Let $B=(b_{ij})$ be a complex $n \times n$ matrix. For $i\in \{1, \dots , n\}$ let $\rho_i = \sum_{j\neq i} |b_{ij}|$ be the sum of the absolute values of the non-diagonal entries in the $i$-th row, and let $D(b_{ii}, \rho_i) \subset \mathbb C$ be the closed disk centered at $b_{ii}$ with radius $\rho_i$. Every eigenvalue of $B$ lies in at least one of the the discs $D(b_{ii}, \rho_i)$. 
\end{theorem}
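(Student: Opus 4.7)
The plan is to give the classical one-line-of-inequalities argument by extracting information from the row of the eigenvalue equation indexed by the largest-modulus coordinate of an eigenvector. The only nontrivial idea is that choosing this distinguished index lets us absorb the off-diagonal terms into $\rho_i$ times the maximum coordinate, which then cancels against $|v_i|$.

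Concretely, I would fix an eigenvalue $\lambda \in \mathbb{C}$ of $B$ and a corresponding eigenvector $v = (v_1, \dots, v_n)^T \neq 0$. Then I would pick an index $i$ such that $|v_i| = \max_{1\le k\le n}|v_k|$; since $v \neq 0$, this maximum is strictly positive. The $i$-th row of the equation $Bv = \lambda v$ reads
\[
\sum_{j=1}^n b_{ij} v_j = \lambda v_i,
\]
which I would rearrange as $(\lambda - b_{ii}) v_i = \sum_{j\neq i} b_{ij} v_j$.

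Applying the triangle inequality and the maximality of $|v_i|$ gives
\[
|\lambda - b_{ii}|\, |v_i| \;\le\; \sum_{j\neq i} |b_{ij}|\, |v_j| \;\le\; |v_i| \sum_{j\neq i} |b_{ij}| \;=\; |v_i|\,\rho_i.
\]
Dividing both sides by $|v_i| > 0$ yields $|\lambda - b_{ii}| \le \rho_i$, i.e.\ $\lambda \in D(b_{ii}, \rho_i)$, which is the desired conclusion.

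The only step that requires care is the choice of the index $i$: without picking the coordinate of maximal modulus, the inequality $|v_j| \le |v_i|$ used in the second step would fail and one could not factor $|v_i|$ out to divide through. Everything else is the triangle inequality and elementary algebra, so I do not anticipate any genuine obstacle; the proof is essentially forced once the right row is selected.
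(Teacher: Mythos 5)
Your proof is correct and is the standard argument for Gershgorin's theorem: select the coordinate of maximal modulus in an eigenvector, isolate the diagonal term in that row of $Bv=\lambda v$, and apply the triangle inequality before dividing by $|v_i|>0$. The paper does not prove this statement at all --- it cites it as a classical result --- so your complete and correct write-up of the textbook proof is all that could be asked for here.
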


For the rest of the article we use the notation $e_k := \begin{pmatrix} 0 & \cdots & 0& 1 & 0 & \cdots & 0
\end{pmatrix}^T$.

\begin{lemma}\label{unique}
For each $n\in \mathbb N$ there exists a unique positive eigenvalue of $\tilde{A}_{2n +1}$. Moreover, the corresponding (real) eigenspace $E^+ \subset \R^{2n+2}$ is $1$-dimensional and its submodule of integral elements $\Z^{2n+2} \cap E^+$ is generated by $e_1 + e_{2n+2}$.  
\end{lemma}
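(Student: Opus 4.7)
The plan is to exhibit $v := e_1 + e_{2n+2}$ as a positive eigenvector of $\tilde{A}_{2n+1}$ and then apply the Gershgorin circle theorem to the orthogonal complement $v^{\perp}$ to rule out any further positive eigenvalues. For the eigenvector, I would simply add the first and last columns of $\tilde{A}_{2n+1}$: since the $\pm 1$'s in each middle row cancel, the sum equals $(2n-1)(e_1 + e_{2n+2})$, so $\tilde{A}_{2n+1} v = (2n-1) v$ with $2n - 1 > 0$. As $\tilde{A}_{2n+1}$ is real symmetric, the hyperplane $v^{\perp} = \{x \in \R^{2n+2} : x_1 + x_{2n+2} = 0\}$ is invariant, and it suffices to show that $\tilde{A}_{2n+1}|_{v^{\perp}}$ has no positive eigenvalues.

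The main difficulty is that Gershgorin applied directly to $\tilde{A}_{2n+1}$ fails: the first and last rows contribute discs of radius $4n+1$ that extend far into the positive reals. The key idea is to pick the convenient basis $\{w, e_2, e_3, \ldots, e_{2n+1}\}$ of $v^{\perp}$ with $w := e_1 - e_{2n+2}$. A short computation gives $\tilde{A}_{2n+1} w = -(2n+3) w + 2(e_2 + \cdots + e_{2n+1})$ and $\tilde{A}_{2n+1} e_i = w - 2 e_i$ for $2 \leq i \leq 2n+1$, so in this basis the restricted operator has matrix
\[
B = \begin{pmatrix} -(2n+3) & 1 & 1 & \cdots & 1 \\ 2 & -2 & 0 & \cdots & 0 \\ 2 & 0 & -2 & \cdots & 0 \\ \vdots & \vdots & \vdots & \ddots & \vdots \\ 2 & 0 & 0 & \cdots & -2 \end{pmatrix}.
\]
Applying \cref{thm:gershgorin} to $B$, the first row yields the disc $D(-(2n+3), 2n) \subseteq [-(4n+3), -3]$, while each of the remaining $2n$ rows yields $D(-2, 2) = [-4, 0]$. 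Every eigenvalue of $B$, and hence of $\tilde{A}_{2n+1}|_{v^{\perp}}$, is therefore non-positive.

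Combining these two steps, $2n - 1$ is the unique positive eigenvalue of $\tilde{A}_{2n+1}$ and the associated eigenspace $E^+$ is the line spanned by $v$. The integer lattice claim is then immediate: because $v = e_1 + e_{2n+2}$ is a primitive integer vector in the one-dimensional subspace $E^+$, we have $\Z^{2n+2} \cap E^+ = \Z\langle e_1 + e_{2n+2}\rangle$, as claimed.
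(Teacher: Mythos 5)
Your proof is correct and follows essentially the same strategy as the paper's: exhibit $e_1+e_{2n+2}$ as a $(2n-1)$-eigenvector, then apply Gershgorin to the restriction of $\tilde A_{2n+1}$ to the orthogonal complement in a basis that tames the large off-diagonal entries. The only (harmless) differences are that you use the matrix of the restricted operator in the basis $\{e_1-e_{2n+2},e_2,\dots,e_{2n+1}\}$ where the paper uses the Gram matrix $f_j^T\tilde A f_i$ (equivalent for counting positive eigenvalues by Sylvester's law of inertia), and you deduce one-dimensionality of $E^+$ directly from the invariant splitting $\R v\oplus v^\perp$ rather than via the paper's separate row-reduction of $\tilde A_{2n+1}-(2n-1)I$.
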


\begin{proof}
Let $w := e_1 + e_{2n+2}$ and notice that $\tilde{A}_{2n +1} w = (2n -1)w$, so that $\lambda := 2n -1$ is a positive eigenvalue of $\tilde{A}_{2n+1}$. We will argue that all other eigenvalues are non-positive. As a first step, we show that the eigenspace associated to $\lambda = 2n-1$ has dimension $1$. Note that 
\[
\tilde{A}_{2n+1} - \lambda I = \begin{pmatrix}
-2n - 1 & 1 & 1 & \cdots & 1 & 2n+1 \\
1 & -2n - 1 & 0 & \cdots & 0 & -1 \\
1 & 0 & -2n - 1 & \cdots & 0 & -1 \\
\vdots & \vdots & \vdots & \ddots & \vdots & \vdots\\
1 & 0 & 0 & \cdots & -2n - 1 & -1 \\
2n+1 & -1 & -1 & \cdots & -1 & -2n - 1
\end{pmatrix}.
\]
Replacing the last row with the sum of the first and last rows produces a row-equivalent matrix with a final row of $0$ entries. Therefore, to verify that $\tilde{A}_{2n+1} - \lambda I$ has a $1$-dimensional kernel, it suffices to establish invertibility of the upper-left $(2n+1)\times (2n+1)$ block 
\[
\begin{pmatrix}
-2n - 1 & 1 & 1 & \cdots & 1  \\
1 & -2n - 1 & 0 & \cdots & 0  \\
1 & 0 & -2n - 1 & \cdots & 0 \\
\vdots & \vdots & \vdots & \ddots & \vdots \\
1 & 0 & 0 & \cdots & -2n - 1  
\end{pmatrix}.
\]
Scaling the first row by $2n+1$ and then adding rows $2$ through $2n+1$ to the result gives the lower-triangular matrix 
\[
\begin{pmatrix}
-(2n+1)^2 + 2n & 0 & 0 & \cdots & 0  \\
1 & -2n - 1 & 0 & \cdots & 0  \\
1 & 0 & -2n - 1 & \cdots & 0 \\
\vdots & \vdots & \vdots & \ddots & \vdots \\
1 & 0 & 0 & \cdots & -2n - 1  
\end{pmatrix}.
\]
Invertibility readily follows, as $-(2n+1)^2 + 2n = -4n^2 - 2n - 1 < 0$. This establishes that the eigenspace $E^+$ of $\tilde{A}_{2n+1}$ associated to $\lambda = 2n-1$ is $1$-dimensional as a real subspace of $\R^{2n+2}$. 

Next, we observe that $w = e_1 + e_{2n+2}$ not only spans $E^+$ as a real subspace but is also an integrally minimal element, consequently generating $\Z^{2n+2} \cap E^{+}$. 

Finally, we argue that there are no other positive eigenvalues. Since $\tilde{A}_{2n+1}$ is real and symmetric, all other eigenvectors lie in the orthogonal complement of $w$, given by 
\[
w^{\perp} = \mathrm{Span} \{f_1:= -e_1 + e_{2n+2}, f_2:= e_2, f_3:= e_3, \dots , f_{2n+1}:= e_{2n+1}\}
\]
Consider the $(2n+1) \times (2n+1)$ restriction matrix $B_{2n+1}:= \tilde{A}_{2n+1}\mid_{w^{\perp}}$, computed component-wise by $b_{ij}:= f_j^T \tilde{A}_{2n+1} f_i$:
\[
B_{2n+1} = \begin{pmatrix}
-(4n+6) & -2 & -2 & \dots & -2  \\
-2 & -2 & 0 & \cdots & 0  \\
-2 & 0 & -2 & \cdots & 0  \\
-2 & \vdots & \vdots & \ddots & \vdots  \\
-2 & 0 & 0 & \cdots & -2  \\
\end{pmatrix}
\]
Appealing to the Gershgorin Circle Theorem, the disk $D(b_{ii}, \rho_i) $ for $i\in \{2, \dots , 2n+1\}$ is  centered at $b_{ii}=-2\in \mathbb C$ with radius $\rho_i = 2$. The remaining disk is centered at $b_{11}= -(4n+6)\in \C$ with radius $\rho_1 = 2n\cdot 2= 4n$. Since none of these disks intersect $\mathbb R^+ = \{(x,y)\in \mathbb C : x>0\}$, \cref{thm:gershgorin} implies that $B_{2n+1}$ has no positive eigenvalues, completing the proof of the lemma. 
\end{proof}

\begin{proof}[Proof of (2) of \cref{thm:main}.]
Suppose for the sake of contradiction that there exists $m \neq n$ such that $\Sigma_{2m+1} \leq \Sigma_{2n+1}$. Since $H_1(\Sigma_{2n+1}; \Z) \cong \Z^{2n+2}$ and $H_1(\Sigma_{2m+1}; \Z)$ must inject into $H_1(\Sigma_{2n+1}; \Z)$, we may assume $m < n$. 

Consider the respective symmetrized Seifert forms $\tilde{V}_{2m+1}$ and $\tilde{V}_{2n+1}$ represented by the matrices $\tilde{A}_{2m+1}$ and $\tilde{A}_{2n+1}$. By \cref{unique-eigenvalue}, both $\tilde{A}_{2m+1}$ and $\tilde{A}_{2n+1}$ have unique positive eigenvalues with $1$-dimensional eigenspaces $E^+_{2m+1}$ and $E^+_{2n+1}$, respectively.  

For any $k$, let $T_{2k+1}\subset \Sigma_{2k+1}$ denote the $(2,2k+1)$ torus knot drawn in red in \cref{fig:torus_family} (also identified with $\Lambda_{2k+1}$ in \cref{sec:qp_fib}). Consider the induced quadratic form $Q_n: H_1(\Sigma_{2n+1}; \mathbb Z) \to \mathbb Z$ defined by $Q_n(a) := \tilde{V}_{2n+1}(a,a)$. Since $\Sigma_{2m+1}\leq \Sigma_{2n+1}$, it follows that $\Sigma_{2n+1}$ supports both $T_{2n+1}$ and $T_{2m+1}\subset \Sigma_{2m+1}\subset \Sigma_{2n+1}$ as simple closed curves.

By definition of $Q_n$, for any closed curve $\gamma\subset \Sigma_{2n+1}$, the integer $Q_n([\gamma])$ is twice the $\Sigma_{2n+1}$-surface framing of $\gamma$. By construction of the family $\Sigma_{2k+1}$, the $\Sigma_{2k+1}$-surface framing integer of $T_{2k+1}$ is the Thurston-Bennequin invariant of its max-tb representative $\Lambda_{2k+1}$. Since $\Sigma_{2m+1}\leq \Sigma_{2n+1}$, the $\Sigma_{2m+1}$-framing of $T_{2m+1}$ agrees with that of the $\Sigma_{2n+1}$-framing. It follows that 
\begin{align*}
    Q_n([T_{2m+1}]) &= 2(2m-1), \\
    Q_n([T_{2n+1}]) &= 2(2n-1).
\end{align*}
With respect to the basis of $H_1(\Sigma_{2n+1}; \Z)$ in \cref{fig:torus_family_curves}, we have $[T_{2n+1}] = [\gamma_1] + [\gamma_{2n+2}] = w$, where $w$ is the eigenvector with unique positive eigenvalue and eigenspace $E_{2n+1}^+$ from \cref{unique}. Since $Q_n([T_{2m+1}]) > 0$ and $Q_n$ has a $1$-dimensional positive definite invariant subspace, it must be the case that $[T_{2m+1}]\in E_{2n+1}^+$. Moreover, $[T_{2m+1}]\in H_1(\Sigma_{2n+1}; \Z)$ is an integral element of the real subspace $E_{2n+1}^+$. Since the integral elements of $E_{2n+1}^+$ are generated by $[T_{2n+1}]$ by \cref{unique}, we have $[T_{2m+1}] = c\cdot [T_{2n+1}]$ for some integer $c\in \Z$. 

Consequently, $Q_n([T_{2m+1}]) = c^2 \cdot Q_n([T_{2n+1}])$ and thus $2(2m-1)= c^2\cdot 2(2q-1).$ Some rearrangement gives $c^2 = \frac{2m-1}{2n-1} <1$. Since $c\in \Z$, this is a contradiction.  
\end{proof}

\bibliography{references}
\bibliographystyle{amsalpha}

\end{document}